\title{Density of Primes in $l^{th}$ Power Residues}
\author{R. Balasubramanian, Prem Prakash Pandey}
\newtheorem{lem}{Lemma}
\newtheorem{rem}{Remark}
\newtheorem{thm}{Theorem}
\begin{document}

\maketitle

\begin{abstract}
Given a prime number $l$, a finite set of integers $S=\{a_1, \ldots ,a_m\}$  and $m$ many $l^{th}$ roots of unity $\zeta_l^{r_i}, i=1, \ldots ,m$ we study the distribution of primes $\mathbf{p}$ in $\mathbb{Q}(\zeta_l)$ such that the $l^{th}$ residue symbol of $a_i$ with respect to $\mathbf{p}$ is $\zeta_l^{r_i}, \mbox{ for all } i$. We find out that this is related to the degree of the extension $\mathbb{Q}(a_1^{\frac{1}{l}}, \ldots ,a_m^{\frac{1}{l}})/\mathbb{Q}$. We give an algorithm to compute this degree. Also we relate this degree to rank of a matrix obtained from $S=\{a_1, \ldots ,a_m\}$. This latter argument enables one to describe the degree $\mathbb{Q}(a_1^{\frac{1}{l}}, \ldots ,a_m^{\frac{1}{l}})/\mathbb{Q}$ in much simpler terms.
\end{abstract}

\section{Introduction}
In a recent paper \cite{RB} the authors have computed the relative density of primes for which a given finite string $S=\{a_1, \ldots ,a_m\}$ of integers are quadratic residues simultaneously. It turns out, via Chebotarev density theorem, that this density is reciprocal of the degree of the multiquadratic extension given by square roots of the finite string of given integers over $\mathbb{Q}$. Given a field $K$ which contains an $n^{th}$ root of unity and given a finite set of integers $S=\{a_1, \ldots ,a_m\}$ one can determine the degree of the extension $K(a_1^{\frac{1}{n}}, \ldots ,a_m^{\frac{1}{n}})/K$ using Galois theory, for instance see \cite{SHW}. In this article, we  study the distribution of primes $\mathbf{p} \mbox{ of } \mathbb{Q}(\zeta_l)$ modulo which each of $a_i$ assumes a preassigned $l^{th}$ power residue symbol and then relate it to the degree of the extension $\mathbb{Q}(a_1^{\frac{1}{l}}, \ldots ,a_m^{\frac{1}{l}})/\mathbb{Q}$. We give two methods to compute the degree of the extension $\mathbb{Q}(a_1^{\frac{1}{l}}, \ldots ,a_m^{\frac{1}{l}})/\mathbb{Q}$ and either of the two methods may prove to be useful at a given instance.

In section 2, we use a ramification argument in place of the classical use of the Eisenstein criterion to compute the degree of the extension $\mathbb{Q}(a_1^{\frac{1}{l}}, \ldots ,a_m^{\frac{1}{l}})/\mathbb{Q}$. Section 3 deals with the $l^{th}$ power residue symbols and study of the distribution of primes $\mathbf{p}$ modulo which they take a fixed value for each $a_i$. In section 4, we define a matrix $T$ and then proceed to relate the degree of the extension to the rank of $T$. The tools we use are basic in nature and can be worked out from any basic text on algebraic number theory but for the sake of completion we will give some of the proofs.\\

We will fix an odd prime $l$ and $\zeta_l$ will stand for a fixed primitive $l^{th}$ root of unity.

\section{Determination Of The Degree}

To start with, we can assume that $a_i^{'s}$ are $l^{th}$ power free and none of them is $1$.\\

One has following;
\begin{lem} 
If $a\in \mathbb{Z}$ is not a $l^{th}$ power then $X^l-a$ is irreducible over $\mathbb{Z}$.
\end{lem}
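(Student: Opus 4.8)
The plan is to prove irreducibility over $\mathbb{Q}$ (which gives irreducibility over $\mathbb{Z}$, since $X^l-a$ is monic and primitive, so that any rational factorisation refines to a factorisation into monic integer polynomials by Gauss's lemma) by arguing through the roots in $\mathbb{C}$. First I would fix a complex number $\alpha$ with $\alpha^l=a$; then the full root set of $X^l-a$ is $\{\zeta_l^{j}\alpha : 0\le j\le l-1\}$. Suppose for contradiction that $X^l-a=f(X)g(X)$ with $f$ monic of degree $d$ and $1\le d\le l-1$, where we may take $f\in\mathbb{Z}[X]$. Since $f$ divides $X^l-a$, its roots are exactly $\{\zeta_l^{j}\alpha : j\in J\}$ for some $d$-element subset $J\subseteq\{0,\dots,l-1\}$.

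The key computation is to read off the constant term. We have $f(0)=\prod_{j\in J}(-\zeta_l^{j}\alpha)=(-1)^d\,\zeta_l^{s}\,\alpha^{d}$ where $s=\sum_{j\in J}j$, so the quantity $c:=\zeta_l^{s}\alpha^{d}=(-1)^d f(0)$ is rational because $f\in\mathbb{Z}[X]$. Raising to the $l$-th power annihilates the root of unity and yields $c^{l}=\zeta_l^{sl}\alpha^{dl}=(\alpha^{l})^{d}=a^{d}$, an identity between a rational $l$-th power and $a^d$.

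Now I would exploit the primality of $l$. Since $1\le d\le l-1$ and $l$ is prime, $\gcd(d,l)=1$, so choose $u,v\in\mathbb{Z}$ with $ud+vl=1$; then $a=a^{ud+vl}=(a^{d})^{u}\,a^{vl}=(c^{l})^{u}(a^{v})^{l}=(c^{u}a^{v})^{l}$, exhibiting $a$ as the $l$-th power of the rational number $c^{u}a^{v}$. Finally, an integer that is the $l$-th power of a rational is the $l$-th power of an integer (write the rational in lowest terms and compare $\pi$-adic valuations at each prime, or invoke that $\mathbb{Z}$ is integrally closed in $\mathbb{Q}$); this contradicts the hypothesis that $a$ is not an $l$-th power.

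I expect the main obstacle to be precisely the passage in the third paragraph that turns mere rationality of the product of $d$ roots into $l$-th-power-ness of $a$. This is exactly the point at which primality of $l$ is indispensable: for composite exponents the analogous statement is false (for instance $X^{4}+4$ factors), and the argument survives only because $\gcd(d,l)=1$ is guaranteed for \emph{every} admissible degree $d$ when $l$ is prime. A minor secondary step to dispatch cleanly is the descent from a rational $l$-th root of $a$ to an integral one.
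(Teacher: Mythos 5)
Your proof is correct and complete. Note that the paper itself states this lemma without proof (it is offered as a known fact, with the authors' own innovation reserved for Lemma 2, where a ramification argument replaces the classical Eisenstein-type reasoning), so there is no in-paper argument to diverge from; what you have written is the standard classical proof, as found for instance in Lang's \emph{Algebra}: a proper monic factor $f$ of degree $d$ forces $\zeta_l^{s}\alpha^{d}\in\mathbb{Q}$, raising to the $l$-th power gives $a^{d}$ as a rational $l$-th power, and B\'ezout with $\gcd(d,l)=1$ upgrades this to $a$ itself being an $l$-th power, contradicting the hypothesis. All the steps check out: Gauss's lemma legitimizes working over $\mathbb{Q}$ with an integral monic factor, the constant-term computation is right, and your final descent from a rational $l$-th root to an integral one (via $\mathbb{Z}$ being integrally closed, or valuations) is exactly what is needed. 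Two points you handle implicitly but correctly: the hypothesis excludes $a=0$ (so $\alpha\neq 0$ and $c\neq 0$, making $c^{u}a^{v}$ meaningful even for negative $u$), and your remark that primality of $l$ is what makes \emph{every} degree $1\le d\le l-1$ coprime to $l$ --- with $X^{4}+4$ as the witness that this fails for composite exponents --- correctly identifies the crux of the argument.
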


\begin{lem}
 Let $b_1, b_2,  \ldots , b_i$ be some integers and $b$ be an intger which is not a $l^{th}$ power and such that there is a prime $q \neq l$ which divides $b$ but does not divide any of $b_j$. Then $[\mathbb{Q}(b_1^{\frac{1}{l}}, \ldots ,b_i^{\frac{1}{l}},b^{\frac{1}{l}}):\mathbb{Q}(b_1^{\frac{1}{l}}, \ldots ,b_i^{\frac{1}{l}})]=l$.
\end{lem}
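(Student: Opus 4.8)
The plan is to set $K = \mathbb{Q}(b_1^{\frac{1}{l}}, \ldots, b_i^{\frac{1}{l}})$ and $L = K(b^{\frac{1}{l}})$, and to prove $[L:K] = l$ by a ramification argument concentrated at the prime $q$. Since $b^{\frac{1}{l}}$ is a root of $X^l - b \in K[X]$, we have $[L:K] \le l$ for free; the entire content of the lemma is the matching lower bound, and I would obtain it by forcing a ramification index of $l$ into the extension $L/K$ using $q$. Notice that I never need $X^l - b$ to stay irreducible over $K$: the ramification will pin down the degree directly.

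First I would control the behaviour of $q$ in $K/\mathbb{Q}$. For each $j$, the element $b_j^{\frac{1}{l}}$ is an algebraic integer, so the primes ramifying in $\mathbb{Q}(b_j^{\frac{1}{l}})$ divide the discriminant of $X^l - b_j$, which up to sign is $l^l b_j^{l-1}$. As $q \ne l$ and $q \nmid b_j$, the prime $q$ is therefore unramified in each $\mathbb{Q}(b_j^{\frac{1}{l}})$; since a prime unramified in each of finitely many number fields is unramified in their compositum, $q$ is unramified in $K$. The hypothesis $q \nmid b_j$ is used precisely here: it guarantees that the ramification coming from $b$ cannot already be present in $K$.

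Next I would show that $q$ is totally ramified in $\mathbb{Q}(b^{\frac{1}{l}})/\mathbb{Q}$ with index $l$. Writing $\beta = b^{\frac{1}{l}}$ and taking a valuation $w$ extending the $q$-adic valuation, the relation $\beta^l = b$ gives $l\, w(\beta) = v_q(b)$. Because $b$ is $l$-th power free and $l$ is prime, $v_q(b) \in \{1, \ldots, l-1\}$ is coprime to $l$, so $w(\beta) = v_q(b)/l$ can lie in the value group only when the ramification index is divisible by $l$. Combined with $e \le [\mathbb{Q}(b^{\frac{1}{l}}):\mathbb{Q}] = l$ (the degree being exactly $l$ by Lemma 1, since $b$ is not an $l$-th power), this forces the ramification index at $q$ to equal $l$.

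Finally I would combine these facts in the two towers $\mathbb{Q} \subseteq K \subseteq L$ and $\mathbb{Q} \subseteq \mathbb{Q}(b^{\frac{1}{l}}) \subseteq L$. Choose a prime $\mathfrak{Q}$ of $L$ over $q$, lying over a prime $\mathfrak{q}$ of $K$. By multiplicativity of ramification indices and the unramifiedness of $q$ in $K$, we get $e(\mathfrak{Q}/q) = e(\mathfrak{Q}/\mathfrak{q})$; on the other hand $L \supseteq \mathbb{Q}(b^{\frac{1}{l}})$ where $q$ ramifies with index $l$, so $l \mid e(\mathfrak{Q}/q)$. Hence $l \mid e(\mathfrak{Q}/\mathfrak{q}) \le [L:K] \le l$, which squeezes $[L:K] = l$. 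The main obstacle I anticipate is bookkeeping rather than conceptual: making the local valuation computation for $\mathbb{Q}(b^{\frac{1}{l}})$ fully rigorous and invoking multiplicativity correctly across the two towers. Once those are in place, the final squeeze is immediate and the result follows.
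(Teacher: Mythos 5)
Your proof is correct, and it rests on the same central idea as the paper's --- a ramification argument at $q$, with the hypothesis $q \nmid b_j$ serving exactly to make $q$ unramified in $K = \mathbb{Q}(b_1^{\frac{1}{l}}, \ldots, b_i^{\frac{1}{l}})$ --- but the execution is genuinely different. The paper proves irreducibility of $X^l - b$ over $K$: a proper factorization $X^l - b = f_1(X)f_2(X)$ in $K[X]$ would put $f_1(0) = b^{\frac{r}{l}}\zeta_l^c$ (with $0 < r < l$) into $K$, and since $v_q(b)$ is coprime to $l$ this element has fractional valuation at any prime of $K$ above $q$, forcing $q$ to ramify in $K$ --- a contradiction. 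You never touch irreducibility: you show $q$ is totally ramified (index $l$) in $\mathbb{Q}(b^{\frac{1}{l}})$ by the valuation computation $l\,w(\beta) = v_q(b)$, combine this with unramifiedness of $q$ in $K$ and multiplicativity of ramification indices across the two towers, and squeeze $l \mid e(\mathfrak{Q}/\mathfrak{q}) \le [L:K] \le l$. Your route avoids the explicit description of the factors of $X^l - b$ (and any mention of roots of unity) at the cost of a little more tower bookkeeping; the paper's route delivers irreducibility of $X^l - b$ over $K$ as an explicit intermediate statement, which in your argument only follows a posteriori from $[L:K] = l$. One point in your favor: you flag explicitly that $v_q(b)$ must be coprime to $l$ (guaranteed by the standing $l$-th-power-freeness assumption from the start of Section 2), a hypothesis the paper's proof also needs but leaves implicit --- without it the lemma is false as literally stated (e.g., $l = 3$, $b_1 = 2$, $b = 2q^3$ gives $\mathbb{Q}(b^{\frac{1}{3}}) = \mathbb{Q}(2^{\frac{1}{3}})$ and relative degree $1$).
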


\begin{proof}
 Let us write $L=\mathbb{Q}(b_1^{\frac{1}{l}}, \ldots ,b_i^{\frac{1}{l}},b^{\frac{1}{l}})$ and $K=\mathbb{Q}(b_1^{\frac{1}{l}}, \ldots ,b_i^{\frac{1}{l}})$. \\
Since $q \mbox{ does not divide } b_j \mbox{ for any }j$, $q$ is unramified in each of $\mathbb{Q}(b_j^{\frac{1}{l}})$ and hence it is unramified in $K$ as well. On the other hand looking at complex factorization of $X^l-b$ we find that if $X^l-b=f_1(X)f_2(X)$ in $K[X]$ then $f_1(0)=b^{\frac{r}{l}}\zeta_l^c \in K$ for some integers $r \mbox{ and } c$. This will force that $q$ ramifies in $K$, so we have that the polynomial $X^l-b$ is irreducible in $K[X]$. This proves the lemma.
\end{proof}

\underline{Algorithm to compute the degree $[\mathbb{Q}(a_1^{\frac{1}{l}}, \ldots ,a_m^{\frac{1}{l}}):\mathbb{Q}]$} \\
Claim: We can obtain some integers $b_1, \ldots ,b_t$ with following properties;\\
(1) There is a prime number $q_i|b_i$ which does not divide $b_j \mbox{ for }j\neq i$.\\
(2) None of $b_i$ is a $l^{th}$ power.\\
(3) $\mathbb{Q}(a_1^{\frac{1}{l}}, \ldots ,a_m^{\frac{1}{l}})=\mathbb{Q}(b_1^{\frac{1}{l}}, \ldots ,b_t^{\frac{1}{l}})$.\\
We will generate the numbers $b_i's$ in successive steps. Here upper index will indicate the number of steps.\\
Let $q_1$ be a prime divisor of $a_1$ we put $b_1^{(1)}=a_1$. For $i>1$ if $q_1\nmid a_i$ then we will put $b_i^{(1)}=a_i$ and in case $q_1|a_i$ then we will define $b_i^{(1)}$ as follows:\\
Let $r_1 \mbox{ and }r_i$ be the exponent of $q_1 \mbox{ in } a_1 \mbox{ and }a_i$ respectively. Without loss of generallity we can assume that $1 \leq r_1,r_i \leq l-1$. As $m_i$ runs modulo $l$ the numbers $m_ir_1+r_i$ are distinct modulo $l$ hence for some choice of $m_i$ we will have $m_ir_1+r_i=\lambda_i l$ and then we define $b_i^{(1)}=\frac{a_1^{m_i}a_i}{q_1^{\lambda_i l}}$. If any of $b_i^{(}1)$ happens to be a $l^{th}$ power then we will omit it and consider only those $b_i^{(1)}$ which are not $l^{th}$ powers, say, $b_1^{(1)}, \ldots ,b_{s_1}^{(1)}$. Now one has $q_1|b_1^{(1)} \mbox{ and } q_1\nmid b_i^{(1)} \mbox{ for } i>1$, and $\mathbb{Q}(a_1^{\frac{1}{l}}, \ldots ,a_m^{\frac{1}{l}})=\mathbb{Q}((b_1^{(1)})^{\frac{1}{l}}, \ldots ,(b_{s_1}^{(1)})^{\frac{1}{l}})$.
Next we set $b_1^{(2)}=b_1^{(1)}, b_2^{(2)}=b_2^{(1)}$ and start with a prime divisor $q_2 \mbox{ of } b_2^{(2)}$ and repeat the same process to obtain $b_i^{(2)}$. Suppose this process stops at $k^{th}$ step then $b_1^{(k)}, \ldots b_{s_t}^{(k)}$ are the required numbers. We will put $t=s_t$ and if $p=q_i \mbox{ for some }i$ then we call $b_1=b_i^{(k)}$ and rest $t-1$ numbers can be taken in any  order and if $p\neq q_i \mbox{ for some }i$ then we can take any ordering.
Now Lemma 1 gives that $[\mathbb{Q}(b_1^{\frac{1}{l}}):\mathbb{Q}]=l$. Then we use Lemma 2 successively to obtain $[\mathbb{Q}(b_1^{\frac{1}{l}}, \ldots ,b_t^{\frac{1}{l}}):\mathbb{Q}]=l^t$, which is the required degree.

\section{$l^{th}$ power residue symbol}
Let $p$ be a prime different from $l$ and $f$ be the inertia degree of $p$ in $\mathbb{Q}(\zeta_l)$. For any prime ideal $\mathbf{p}$ of $\mathbb{Q}(\zeta_l)$ dividing $p$ and an integer $\alpha \in \mathbb{Q}(\zeta_l)$ not contained in $\mathbf{p}$ one has $l|{p^f-1} \mbox{ and } \alpha^{p^f-1}\equiv 1 \mbox{ (mod }\mathbf{p})$. Hence there is an $l^{th}$ root of unity $\zeta_l^i, 0<i \leq l$ such that $\alpha^{\frac{p^f-1}{l}}\equiv \zeta_l^i \mbox{ (mod }\mathbf{p})$. Since $l^{th}$ roots of unity are distinct modulo $\mathbf{p}$, there is a unique such $i$ and we will define $(\frac{\alpha}{\mathbf{p}})_l =\zeta_l^i$.
We state some results on the higher residue symbols \cite{DH,EM}.\\

\begin{thm}(Kummer's Criterion)
 $(\frac{\alpha}{\mathbf{p}})_l \equiv  \alpha^{\frac{p^f-1}{l}} \mbox{ (mod }\mathbf{p})$.
\end{thm}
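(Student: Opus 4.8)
The plan is to note that the asserted congruence is, in effect, a restatement of the definition of $\left(\frac{\alpha}{\mathbf{p}}\right)_l$, so the only thing that genuinely requires argument is the well-definedness built into that definition, namely the existence and uniqueness of the integer $i$ with $0 < i \leq l$ satisfying $\alpha^{\frac{p^f-1}{l}} \equiv \zeta_l^i \pmod{\mathbf{p}}$. Once that integer is produced, the theorem follows immediately, since by definition $\left(\frac{\alpha}{\mathbf{p}}\right)_l = \zeta_l^i \equiv \alpha^{\frac{p^f-1}{l}} \pmod{\mathbf{p}}$. Thus I would frame the proof as a verification that the symbol is well defined, after which Kummer's criterion is automatic.

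To establish existence I would pass to the residue field $\mathcal{O}/\mathbf{p} \cong \mathbb{F}_{p^f}$, where $\mathcal{O}$ is the ring of integers of $\mathbb{Q}(\zeta_l)$. Because $p \neq l$, the polynomial $X^l - 1$ is separable modulo $p$, so $\zeta_l$ reduces to an element of exact order $l$ in $\mathbb{F}_{p^f}^{\times}$; this forces $l \mid p^f - 1$, recovering the divisibility quoted before the definition. Writing $\bar{\alpha}$ for the image of $\alpha$, the hypothesis $\alpha \notin \mathbf{p}$ gives $\bar{\alpha} \in \mathbb{F}_{p^f}^{\times}$, hence $\bar{\alpha}^{\,p^f-1} = 1$. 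Setting $\beta = \alpha^{\frac{p^f-1}{l}}$ we then get $\bar{\beta}^{\,l} = \bar{\alpha}^{\,p^f-1} = 1$, so $\bar{\beta}$ is an $l^{th}$ root of unity in $\mathbb{F}_{p^f}$, and therefore equals $\overline{\zeta_l^{\,i}}$ for some $i$.

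Uniqueness of $i$ is where I would invoke separability once more: the $l$ elements $\overline{\zeta_l^{\,1}}, \ldots, \overline{\zeta_l^{\,l}}$ are exactly the distinct roots of $X^l - 1$ in $\mathbb{F}_{p^f}$, so no two of $\zeta_l^{\,1}, \ldots, \zeta_l^{\,l}$ are congruent modulo $\mathbf{p}$. Thus the index $i$ with $\bar{\beta} = \overline{\zeta_l^{\,i}}$ is unique, the symbol is well defined, and lifting the equality $\bar{\beta} = \overline{\zeta_l^{\,i}}$ back to $\mathcal{O}$ yields $\alpha^{\frac{p^f-1}{l}} \equiv \zeta_l^{\,i} = \left(\frac{\alpha}{\mathbf{p}}\right)_l \pmod{\mathbf{p}}$. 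I do not expect a genuine obstacle here; the only point demanding care is the repeated appeal to $p \neq l$ to keep $X^l - 1$ separable modulo $\mathbf{p}$, without which both the divisibility $l \mid p^f - 1$ and the distinctness of the $l^{th}$ roots of unity could fail, and with it the well-definedness of the symbol.
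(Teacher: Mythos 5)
Your proof is correct and is essentially the paper's own treatment: the paper states Kummer's criterion without proof (citing its references), precisely because under its definition of $\left(\frac{\alpha}{\mathbf{p}}\right)_l$ the congruence is definitional, so the only real content is the well-definedness you verify. Your separability argument correctly supplies the two facts the paper asserts without justification in the defining paragraph, namely that $l \mid p^f-1$ and that the $l^{th}$ roots of unity remain distinct modulo $\mathbf{p}$ when $p \neq l$.
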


\begin{thm}
 The $l^{th}$ power residue symbols are completely multiplicative.
\end{thm}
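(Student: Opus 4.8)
The plan is to obtain complete multiplicativity as an immediate consequence of Kummer's Criterion (Theorem 1). Precisely, for integers $\alpha,\beta\in\mathbb{Q}(\zeta_l)$ neither of which lies in $\mathbf{p}$, I want to establish
\[
\left(\frac{\alpha\beta}{\mathbf{p}}\right)_l=\left(\frac{\alpha}{\mathbf{p}}\right)_l\left(\frac{\beta}{\mathbf{p}}\right)_l.
\]
First I would check that the left-hand symbol is even defined: since $\mathbf{p}$ is a prime ideal and $\alpha,\beta\notin\mathbf{p}$, the product $\alpha\beta$ also lies outside $\mathbf{p}$, so $(\frac{\alpha\beta}{\mathbf{p}})_l$ makes sense.

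Next I would exploit that exponentiation by the fixed integer $\frac{p^f-1}{l}$ is multiplicative. Applying Theorem 1 to each of the three symbols gives the congruences $(\frac{\alpha}{\mathbf{p}})_l\equiv\alpha^{\frac{p^f-1}{l}}$, $(\frac{\beta}{\mathbf{p}})_l\equiv\beta^{\frac{p^f-1}{l}}$ and $(\frac{\alpha\beta}{\mathbf{p}})_l\equiv(\alpha\beta)^{\frac{p^f-1}{l}}$ modulo $\mathbf{p}$. Because $(\alpha\beta)^{\frac{p^f-1}{l}}=\alpha^{\frac{p^f-1}{l}}\beta^{\frac{p^f-1}{l}}$, I can chain these to conclude
\[
\left(\frac{\alpha\beta}{\mathbf{p}}\right)_l\equiv\left(\frac{\alpha}{\mathbf{p}}\right)_l\left(\frac{\beta}{\mathbf{p}}\right)_l\pmod{\mathbf{p}}.
\]

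The final step, which is the only point that needs care, is to upgrade this congruence to an actual equality. Both sides are $l^{th}$ roots of unity: the left-hand side by the very definition of the symbol, and the right-hand side because the product of two $l^{th}$ roots of unity is again one. As was already noted when the symbol was introduced, distinct $l^{th}$ roots of unity remain distinct modulo $\mathbf{p}$; this is exactly the separability of $X^l-1$ modulo $\mathbf{p}$, which holds because $p\neq l$ forces the derivative $lX^{l-1}$ to be prime to $X^l-1$. Hence a congruence between two $l^{th}$ roots of unity forces genuine equality, and the desired identity follows. I do not anticipate any real obstacle here: the entire content is the homomorphism property of the power map together with the injectivity of reduction on the group of $l^{th}$ roots of unity.
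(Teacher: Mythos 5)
Your proof is correct. Note that the paper itself offers no proof of this statement: it is listed among the standard facts on higher residue symbols and delegated to the references \cite{DH,EM}. Your derivation is the standard one that those sources give, and it is complete as written: Kummer's Criterion reduces everything to the homomorphism property of $\alpha \mapsto \alpha^{\frac{p^f-1}{l}}$, and the upgrade from congruence to equality is exactly the injectivity of reduction on $\mu_l$, which you correctly justify via separability of $X^l-1$ modulo $\mathbf{p}$ when $p \neq l$ (the same fact the paper invokes, without proof, when first defining the symbol). The only thing you might add for completeness is that multiplicativity in the \emph{lower} argument, i.e.\ $\bigl(\frac{\alpha}{\mathbb{a}\mathbb{b}}\bigr)_l=\bigl(\frac{\alpha}{\mathbb{a}}\bigr)_l\bigl(\frac{\alpha}{\mathbb{b}}\bigr)_l$, is immediate from the paper's definition of the symbol for ideals as a product over prime divisors with multiplicity; but the content of the theorem is the upper-argument multiplicativity you proved.
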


\begin{thm}
 $\alpha \in \mathbb{Q}(\zeta_l)$ is an $l^{th}$ power modulo $\mathbf{p}$ iff $(\frac{\alpha}{\mathbf{p}})_l =1$.
\end{thm}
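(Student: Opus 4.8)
The plan is to reduce the statement to an elementary fact about the cyclic multiplicative group of the residue field, using Kummer's criterion (Theorem 3) to translate between the residue symbol and the power $\alpha^{\frac{p^f-1}{l}}$.

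First I would fix notation: let $\mathbb{F}=\mathcal{O}_{\mathbb{Q}(\zeta_l)}/\mathbf{p}$ be the residue field, a finite field with $q:=p^f$ elements, and write $\bar{\alpha}$ for the image of $\alpha$ in $\mathbb{F}^{*}$ (recall $\alpha\notin\mathbf{p}$, so $\bar{\alpha}\neq 0$). By Theorem 3 we have $(\frac{\alpha}{\mathbf{p}})_l \equiv \bar{\alpha}^{\frac{q-1}{l}} \pmod{\mathbf{p}}$, and since the $l^{th}$ roots of unity are distinct modulo $\mathbf{p}$ (as noted in the definition of the symbol), the symbol equals $1$ precisely when $\bar{\alpha}^{\frac{q-1}{l}}=1$ in $\mathbb{F}$. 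Moreover, $\alpha$ being an $l^{th}$ power modulo $\mathbf{p}$ means exactly that $\bar{\alpha}$ lies in the image of the $l^{th}$ power map on $\mathbb{F}^{*}$, since the reduction map $\mathcal{O}_{\mathbb{Q}(\zeta_l)}\to\mathbb{F}$ is surjective. Thus the theorem reduces to the following assertion: for $x\in\mathbb{F}^{*}$, one has $x^{\frac{q-1}{l}}=1$ if and only if $x$ is an $l^{th}$ power in $\mathbb{F}^{*}$.

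For the forward implication of this reduced statement, if $x=y^{l}$ then $x^{\frac{q-1}{l}}=y^{q-1}=1$ by Fermat's little theorem in the group $\mathbb{F}^{*}$ of order $q-1$. For the converse I would exploit that $\mathbb{F}^{*}$ is cyclic of order $q-1$ with $l\mid q-1$ (as recorded at the start of Section 3): fixing a generator $g$ and writing $x=g^{k}$, the $l^{th}$ powers are exactly the $g^{k}$ with $l\mid k$, whereas $g^{\frac{q-1}{l}}$ has order $l$, so that $x^{\frac{q-1}{l}}=g^{k\frac{q-1}{l}}=1$ forces $l\mid k$. Hence the two conditions coincide, completing the reduced assertion and therefore the theorem.

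I do not expect a serious obstacle here; the only point requiring a little care is the translation step, namely verifying that the equality $(\frac{\alpha}{\mathbf{p}})_l=1$ of roots of unity is genuinely equivalent to the congruence $\bar{\alpha}^{\frac{q-1}{l}}=1$ in the residue field. This rests entirely on the distinctness of the $l^{th}$ roots of unity modulo $\mathbf{p}$, already built into the definition of the symbol, and once it is in place the remaining content is just the standard cyclic-group dichotomy together with Fermat's little theorem.
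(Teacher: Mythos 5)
Your proposal is correct, but note that the paper itself offers no proof of this statement: it is quoted as one of several standard facts about higher power residue symbols, with references to Hilbert and to Murty--Esmonde, so there is no in-paper argument to compare against. Your argument is exactly the standard one those sources give, namely Euler's criterion generalized to $l^{th}$ powers: reduce via Kummer's criterion to the residue field $\mathbb{F}$ with $q=p^f$ elements (using that $l\mid q-1$, recorded at the start of Section 3, and that the $l^{th}$ roots of unity stay distinct modulo $\mathbf{p}$ since $p\neq l$, so the congruence $(\frac{\alpha}{\mathbf{p}})_l\equiv\bar{\alpha}^{\frac{q-1}{l}}$ pins down the symbol exactly), and then settle the equivalence in the cyclic group $\mathbb{F}^{*}$ by writing $x=g^{k}$ and observing $x^{\frac{q-1}{l}}=1$ iff $l\mid k$ iff $x$ is an $l^{th}$ power. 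All steps are sound, including the care you take that $\bar{\alpha}\neq 0$ and that surjectivity of reduction identifies ``$l^{th}$ power modulo $\mathbf{p}$'' with membership in $(\mathbb{F}^{*})^{l}$. One trivial slip: you cite Kummer's criterion as Theorem 3, but in the paper's numbering it is Theorem 1; the statement you are proving is Theorem 3.
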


Given any ideal $\mathbb{a}$ of $\mathbb{Q}(\zeta_l)$, we will define  $(\frac{\alpha}{\mathbb{a}})_l=\prod_{\mathbf{p}| \mathbb{a}} (\frac{\alpha}{\mathbf{p}})_l$ with multiplicity counted. For $\beta \in \mathbb{Q}(\zeta_l)$ we will define $(\frac{\alpha}{\beta})_l=(\frac{\alpha}{(\beta)})_l$ where $(\beta)$ stands for the principal ideal generated by $\beta$.\\
An integer $\alpha \in \mathbb{Q}(\zeta_l)$ is called primary if it is congruent to a rational integer modulo $(1-\zeta_l)^2$.

\begin{thm}(Eisenstein's Reciprocity Law)
 If $\alpha$ is a primary integer and $a$ is a rational integer both coprime to each other and coprime to $l$ then one has $(\frac{a}{\alpha})_l=(\frac{\alpha}{a})_l$.
\end{thm}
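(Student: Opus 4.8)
The plan is to prove this reciprocity by the classical method of Gauss sums, first reducing the statement to the case of a single rational prime and then extracting the symmetry from the Galois-equivariance and the prime factorization of Gauss sums.

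First I would use Theorem 2 (complete multiplicativity) to reduce the claim. Writing $a=\pm\prod_j q_j$ as a product of rational primes and noting that $(\frac{-1}{\alpha})_l=1$ because $l$ is odd (so $-1=(-1)^l$ is an $l$th power), both sides of the asserted identity are multiplicative in the $q_j$; hence it suffices to treat a single rational prime $q=a$, coprime to $\alpha$ and to $l$, and to prove $(\frac{q}{\alpha})_l=(\frac{\alpha}{q})_l$ for $\alpha$ primary. Set $\lambda=1-\zeta_l$, so $(l)=(\lambda)^{l-1}$ and $\lambda$ is the unique prime above $l$. Let $\mathfrak q_1,\ldots,\mathfrak q_g$ be the primes of $\mathbb{Z}[\zeta_l]$ above $q$, each of residue degree $f$ equal to the order of $q$ modulo $l$, with residue field $\mathbb{F}_{q^f}$. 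For each $i$, let $\chi_i$ be the $l$th power residue character on $\mathbb{F}_{q^f}$; by Kummer's criterion (Theorem 1) one has $\chi_i(\alpha)=(\frac{\alpha}{\mathfrak q_i})_l$, and therefore, by the definition of the symbol for a general ideal, $(\frac{\alpha}{q})_l=\prod_i\chi_i(\alpha)$.

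To each $\chi_i$ I would attach the Gauss sum $g_i=\sum_{x\in\mathbb{F}_{q^f}}\chi_i(x)\,\zeta_q^{\mathrm{Tr}(x)}$, where $\mathrm{Tr}$ is the trace to $\mathbb{F}_q$, and form $G=\prod_i g_i^{\,l}$. Two facts drive the argument. First, $g_i\overline{g_i}=q^f$, so the ideal $(G)$ is supported only on the primes above $q$. Second, the Stickelberger relation gives the exact prime-ideal factorization of each $g_i^{\,l}$ in $\mathbb{Z}[\zeta_l]$ in terms of the Stickelberger element; together these determine $g_i^{\,l}$, and hence $G$, up to a power of $\zeta_l$. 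The heart of the argument is then to compute the effect of the Galois automorphisms $\tau_b\colon\zeta_l\mapsto\zeta_l^{b}$ (which send $\chi_i\mapsto\chi_i^{\,b}$ and fix $\zeta_q$) on $G$, and to compare this transformation with the explicit factorization coming from the two facts above. Translating the two computations into residue symbols via Kummer's criterion — one side yielding $(\frac{q}{\alpha})_l$ and the other $\prod_i\chi_i(\alpha)=(\frac{\alpha}{q})_l$ — produces the desired equality, once the residual power-of-$\zeta_l$ ambiguity is removed. This last point is exactly where the hypothesis that $\alpha$ is primary enters: the congruence $\alpha\equiv(\text{rational integer})\pmod{\lambda^2}$ forces the would-be correction factor attached to the prime $\lambda$ above $l$ to be trivial, so that no spurious root of unity survives in the comparison.

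I expect the main obstacle to be establishing and then correctly bookkeeping the Stickelberger factorization: one must track the exponents of the primes $\mathfrak q_i$ in each $g_i^{\,l}$ and verify that $G$ has precisely the factorization needed to read off $(\frac{q}{\alpha})_l$. The other ingredients — the two reductions, the norm identity $g_i\overline{g_i}=q^f$, and the use of primariness to kill the contribution at $\lambda$ — are comparatively routine once the factorization is in hand. If one is willing to invoke class field theory, an alternative is to deduce the statement from the Hilbert-symbol product formula $\prod_v(\frac{a,\alpha}{v})_l=1$: the local symbols are trivial away from the places dividing $a$, $\alpha$, $l$, and $\infty$; the symbol at $\infty$ is trivial since $l$ is odd; primariness of $\alpha$ kills the symbol at $\lambda$; and the remaining symbols at the primes dividing $a$ and $\alpha$ are exactly the two sides of the reciprocity, so the product formula gives their equality. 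The Gauss-sum route, however, keeps the tools at the level used elsewhere in the paper.
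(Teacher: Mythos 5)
The paper itself gives no proof of Theorem 4: it is quoted as a classical result with references to Hilbert and Esmonde--Murty, so your sketch can only be judged against the standard argument it is modelled on. Measured that way, your template is the right one --- Gauss sums, the Stickelberger factorization, and primariness to dispose of a residual root of unity --- and your preliminary reductions (to $a=q$ a single rational prime, with $-1$ harmless since $-1=(-1)^l$) are sound. But the central construction, as you have set it up, fails. You form $G=\prod_{i}g_i^{\,l}$ over \emph{all} the primes $\mathfrak{q}_1,\ldots,\mathfrak{q}_g$ above $q$. Stickelberger gives $(g_i^{\,l})=\prod_{t=1}^{l-1}\sigma_t^{-1}(\mathfrak{q}_i)^{t}$, and since each $\sigma_t^{-1}$ merely permutes the $\mathfrak{q}_i$ and $\prod_i\mathfrak{q}_i=(q)$, multiplying over $i$ collapses the factorization to $(G)=(q)^{l(l-1)/2}$. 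Because $l$ is odd, $l$ divides $l(l-1)/2$, so $G=\pm\zeta_l^{c}\,\bigl(q^{(l-1)/2}\bigr)^{l}$: your symmetrized product is an $l$-th power times a root of unity, and the symbol $\bigl(\frac{G}{\alpha}\bigr)_l$ retains no trace of $\bigl(\frac{q}{\alpha}\bigr)_l$ whatsoever. On the other side, the Kummer-criterion evaluation gives, for a prime $\mathfrak{p}\mid\alpha$, the congruence $g_i^{N\mathfrak{p}}\equiv\bar{\chi}_i(N\mathfrak{p})\,g_i \pmod{\mathfrak{p}}$, whence $\bigl(\frac{G}{\alpha}\bigr)_l=\bigl(\frac{N\alpha}{q}\bigr)_l^{-1}$ --- a symbol of the rational norm $N\alpha$, which mixes all conjugates of $\alpha$, not $\bigl(\frac{\alpha}{q}\bigr)_l$. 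The two computations therefore combine into a true but useless identity relating $\bigl(\frac{N\alpha}{q}\bigr)_l$ to a root-of-unity symbol; the step ``one side yielding $(\frac{q}{\alpha})_l$'' is exactly where the argument breaks.

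The repair is the classical asymmetry that your symmetrization destroys. One must first reduce the primary $\alpha$ to a product of \emph{primary prime} elements $\pi$ (every ideal prime to $l$ has a generator that becomes primary after multiplication by a suitable power of $\zeta_l$; this reduction also requires the supplementary laws for $\zeta_l$ and for units), and then attach the Gauss sum to the \emph{single} prime $\mathfrak{p}=(\pi)$ --- so the additive character involves $\zeta_p$, not $\zeta_q$. Then Stickelberger reads $(g(\mathfrak{p})^{l})=\prod_{t}\sigma_t^{-1}(\mathfrak{p})^{t}$ with distinct conjugate primes carrying distinct exponents and, crucially, with the explicit generator $\prod_t\sigma_t^{-1}(\pi)^{t}$ available because $\pi$ is in hand. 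Evaluating the symbol of $g(\mathfrak{p})^l$ with respect to $q$, the Kummer side yields precisely $\bigl(\frac{q}{\mathfrak{p}}\bigr)_l$ (using $f'g'=l-1$), while the factorization side reduces by Galois equivariance of the symbol to a power of $\bigl(\frac{\pi}{q}\bigr)_l$ times the symbol of a unit; primariness of $\pi$ and of $g(\mathfrak{p})^l$ is what eliminates that unit, and with the standard character normalization the two evaluations give the reciprocity. Note also that your phrase ``these determine $G$ up to a power of $\zeta_l$'' silently assumes an explicit generator of the Stickelberger ideal power, which the class group obstructs in general --- another reason the argument must run through the element $\pi$. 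Your closing Hilbert-symbol alternative is viable, but there too ``primariness kills the symbol at $\lambda$'' is a nontrivial local computation rather than a formality.
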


We will now introduce some more terminologies and see an alternate way to define the $l^{th}$ residue symbol.\\
Let $L/K$ be a Galois extension of number fields. For an unramified prime $\wp$ of $L$ we will write $k_L$ for the residue field of $\wp$ and $k$ will denote the residue field of $\wp\cap \mathbb{O}_K$. There is an exact sequence \cite{JN},\\
\begin{equation*}
 0\longrightarrow D_{\wp} \longrightarrow Gal(k_L/k) \longrightarrow 0,
\end{equation*}
where  $D_{\wp}$ is the decomposition group of $\wp$. Let $\sigma_{\wp} \in Gal(k_L/k)$ be the Frobenius at $\wp$ then its inverse image (in the above exact sequence) in $D_{\wp}$ is called Artin symbol of $\wp$ for the extension $L/K$ and is written as $\left( \frac{\wp}{L/K} \right) $.\\
We note that if $\wp \mbox{ and } \wp^{'}$ are primes in $L$ above the same prime of $K$ then $\left( \frac{\wp}{L/K} \right)$ $\mbox{ and } \left( \frac{\wp^{'}}{L/K} \right) $ are Galois conjugate by an element of $Gal(L/K)$ which permutes $\wp$ and $\wp^{'}$. In particular if $L/K$ is abelian then for prime $ \mathbf{p}=\wp\cap \mathbb{O}_K$ we can define $\left( \frac{\mathbf{p}}{L/K} \right)=\left( \frac{\wp}{L/K} \right) $.\\

For any $\sigma \in Gal(L/K)$ let $P_{L/K}(\sigma)$ denote the set of prime ideals $\mathbf{p}$ in $K$ such that there is a prime ideal $\wp$ of $L$ above $\mathbf{p}$ such that $\left( \frac{\wp}{L/K} \right) = \sigma $. \\
\begin{thm} (Cebotarev Density Theorem)
 Let $\sigma \in Gal(L/K)$ and $C_{\sigma}$ stand for conjugacy class of $\sigma$. Then density of $P_{L/K}(\sigma)$ is $\frac{|C_{\sigma}|}{[L:K]}$.
\end{thm}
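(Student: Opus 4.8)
The plan is to follow the classical two–step route: reduce the general Galois case to a cyclic extension by a purely group–theoretic comparison of Artin symbols, and then settle the cyclic (abelian) case analytically with $L$–functions. First I would fix $\sigma\in Gal(L/K)$ of order $h$, set $C=\langle\sigma\rangle$ and let $E=L^{C}$, so that $L/E$ is cyclic with group $C$ and $[L:E]=h$. Given an unramified prime $\mathbf{p}\in P_{L/K}(\sigma)$, choose a prime $\wp$ of $L$ above it with $\left(\frac{\wp}{L/K}\right)=\sigma$ and put $\mathbf{q}=\wp\cap\mathbb{O}_{E}$. A short decomposition–group computation shows that $f(\mathbf{q}/\mathbf{p})=1$ (so $N\mathbf{q}=N\mathbf{p}$) and that $\left(\frac{\wp}{L/E}\right)=\sigma$; conversely every degree–one prime $\mathbf{q}$ of $E$ with Artin symbol $\sigma$ in $L/E$ arises from a unique such $\wp$. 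The fibre of $\mathbf{q}\mapsto\mathbf{p}$ then has constant size, which an orbit–stabiliser count (using that $C$ lies in the centraliser $Z_{G}(\sigma)$) identifies as $|Z_{G}(\sigma)|/h=[L:K]/(|C_{\sigma}|\,h)$. Since primes $\mathbf{q}$ of $E$ of residue degree $\ge 2$ over $K$ contribute nothing to the density, equating the Dirichlet series $\sum N\mathbf{q}^{-s}$ over these degree–one primes with the fibre–weighted series $\sum N\mathbf{p}^{-s}$ converts the density $\tfrac1h$ of $\{\mathbf{q}:\left(\frac{\mathbf{q}}{L/E}\right)=\sigma\}$ into the claimed density $|C_{\sigma}|/[L:K]$ for $P_{L/K}(\sigma)$.

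\emph{The cyclic case.} It remains to prove density $1/h$ for a single Artin symbol in $L/E$. For this I would factor the Dedekind zeta function as $\zeta_{L}(s)=\prod_{\chi\in\widehat{C}}L(s,\chi)$, a product of Hecke $L$–functions indexed by the characters of $C$ (via class field theory, $C$ being a quotient of a ray class group of $E$). The trivial character contributes $\zeta_{E}(s)$, with a simple pole at $s=1$, while each nontrivial $\chi$ gives an $L(s,\chi)$ holomorphic at $s=1$. Taking logarithms, $\log L(s,\chi)=\sum_{\mathbf{q}}\chi\!\left(\frac{\mathbf{q}}{L/E}\right)N\mathbf{q}^{-s}+O(1)$, and character orthogonality yields
\[
\sum_{\left(\frac{\mathbf{q}}{L/E}\right)=\sigma}N\mathbf{q}^{-s}
=\frac{1}{h}\sum_{\chi\in\widehat{C}}\overline{\chi(\sigma)}\,\log L(s,\chi)+O(1)
\sim\frac{1}{h}\log\frac{1}{s-1},
\]
which is exactly Dirichlet density $1/h$, as needed to feed the reduction above.

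\emph{The main difficulty.} The one genuinely analytic input, and the real obstacle, is the non–vanishing $L(1,\chi)\neq 0$ for every nontrivial $\chi$. Without it the simple pole of $\zeta_{E}$ could be masked by a zero of some $L(s,\chi)$, and the displayed asymptotic would collapse. In the cyclotomic setting of this paper this is Dirichlet's classical theorem, and since the fields in play are abelian over $\mathbb{Q}$ I would reduce $\zeta_{L}/\zeta_{E}$ to a product of ordinary Dirichlet $L$–functions and quote their non–vanishing; in full generality one invokes the theory of Hecke $L$–functions. The remaining points—negligibility of ramified and of higher–degree primes, and the passage from Dirichlet density to natural density—are routine and would be dispatched briefly.
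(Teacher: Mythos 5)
The paper gives no proof of this theorem at all: it is stated as a classical result (Cebotarev/Chebotarev) and simply quoted, with the only remark being the specialization to abelian $L/K$ used later, so there is no ``paper route'' to compare against. Judged on its own terms, your sketch is the standard and correct proof architecture (Deuring's reduction): the fixed-field $E=L^{\langle\sigma\rangle}$, the bijection between degree-one primes $\mathbf{q}$ of $E$ with $\left(\frac{\mathbf{q}}{L/E}\right)=\sigma$ and primes $\wp$ of $L$ with $\left(\frac{\wp}{L/K}\right)=\sigma$, the fibre count $|Z_G(\sigma)|/h=[L:K]/(|C_\sigma|h)$ via orbit--stabiliser, and the cyclic case by orthogonality of characters against $\log L(s,\chi)$ --- all of these steps check out, and you correctly isolate $L(1,\chi)\neq 0$ as the essential analytic input.

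Two caveats. First, your remark that ``the fields in play are abelian over $\mathbb{Q}$'' is wrong for this paper: the relevant fields $K_a=\mathbb{Q}(\zeta_l,a^{1/l})$ are abelian over $\mathbb{Q}(\zeta_l)$ but \emph{not} over $\mathbb{Q}$ (their Galois group over $\mathbb{Q}$ is a nonabelian semidirect product of order $l(l-1)$), so the proposed shortcut of factoring $\zeta_L/\zeta_E$ into ordinary Dirichlet $L$-functions is unavailable; you must fall back on your general route via Hecke $L$-functions of $E$, where the nonvanishing at $s=1$ follows from the factorization $\zeta_L(s)=\zeta_E(s)\prod_{\chi\neq 1}L(s,\chi)$, the simple poles of $\zeta_L$ and $\zeta_E$, and Hecke's theorem that each nontrivial $L(s,\chi)$ is holomorphic at $s=1$. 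Second, the passage from Dirichlet density to natural density is not quite ``routine'': the paper's own use of the theorem (e.g.\ the counts $\frac{1}{l}\pi(x)+o(\pi(x))$ in the proof of Theorem 6) is phrased in terms of natural density, which requires nonvanishing of the $L(s,\chi)$ on the line $\Re(s)=1$ together with a Tauberian argument, not merely behaviour as $s\to 1^{+}$. Neither point damages the overall structure, but both would need to be stated accurately in a complete write-up.
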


In particular, if $L/K$ is abelian then the density of primes $\mathbf{p}$ of $K$ such that $\left( \frac{\mathbf{p}}{L/K} \right)=\sigma $ is $\frac{1}{[L:K]}$.\\

We have following,
\begin{lem}
 Consider a Galois extension $L/K$ of number fields and let $F$ be an intermediate field such that $F/K$ is Galois. Then for any unramified prime $\wp \mbox{ of }L$ one has $\left( \frac{\wp}{L/K} \right)_{|F}=\left( \frac{\wp \cap \mathbb{O}_F}{L/K} \right)$.
\end{lem}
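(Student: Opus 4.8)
The plan is to reduce the identity to the defining congruence of the Artin symbol on residue fields. Write $G=\mathrm{Gal}(L/K)$, $\phi=\left(\frac{\wp}{L/K}\right)$, and let $\mathbf{p}=\wp\cap\mathbb{O}_K$, $\mathbf{q}=\wp\cap\mathbb{O}_F$ be the primes below $\wp$ in $K$ and $F$. Since $\wp$ is unramified over $K$, the decomposition group $D_\wp$ maps isomorphically onto $Gal(k_L/k)$ in the exact sequence quoted above, so $\phi$ is the \emph{unique} element of $D_\wp$ satisfying $\phi(x)\equiv x^{q_0}\pmod{\wp}$ for every $x\in\mathbb{O}_L$, where $q_0=|k|$ is the cardinality of the residue field of $K$ at $\mathbf{p}$. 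The right-hand side $\left(\frac{\mathbf{q}}{F/K}\right)$ is characterized in the same way, with $F$ in the role of the top field; my goal is therefore to show that $\phi|_F$ meets that same characterization for the extension $F/K$.

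First I would record that $\phi|_F$ is a well-defined element of $\mathrm{Gal}(F/K)$. This is exactly where the hypothesis that $F/K$ is Galois enters: normality of $F/K$ forces $\sigma(F)=F$ for every $\sigma\in G$, and the restriction map $G\to\mathrm{Gal}(F/K)$ is the canonical surjection with kernel $\mathrm{Gal}(L/F)$. Next, since $\phi\in D_\wp$ fixes $\wp$ and carries $\mathbb{O}_F$ onto $\mathbb{O}_F$, one has $\phi|_F(\mathbf{q})=\phi(\wp)\cap\mathbb{O}_F=\wp\cap\mathbb{O}_F=\mathbf{q}$, so $\phi|_F$ lies in the decomposition group $D_{\mathbf{q}}$ of $\mathbf{q}$ for $F/K$.

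It then remains to compute the action of $\phi|_F$ on the residue field $k_F=\mathbb{O}_F/\mathbf{q}$. For any $y\in\mathbb{O}_F\subseteq\mathbb{O}_L$ the defining congruence gives $\phi(y)\equiv y^{q_0}\pmod{\wp}$; since $\phi(y)$ and $y^{q_0}$ both lie in $\mathbb{O}_F$, their difference lies in $\wp\cap\mathbb{O}_F=\mathbf{q}$, whence $\phi|_F(y)\equiv y^{q_0}\pmod{\mathbf{q}}$. The crucial point here is that the exponent $q_0$ is the cardinality of the residue field of the common base $K$, so it is the same for both symbols; this is what makes the restricted Frobenius land on the Frobenius. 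Thus $\phi|_F$ induces the $q_0$-power map on $k_F$ over $k$, i.e. it is a lift of the Frobenius of $\mathrm{Gal}(k_F/k)$.

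Finally I would invoke uniqueness. Because $\wp$ is unramified over $K$, multiplicativity of ramification indices in the tower $K\subseteq F\subseteq L$ gives $e(\mathbf{q}/\mathbf{p})=1$, so $\mathbf{q}$ is unramified over $K$ and $D_{\mathbf{q}}\cong\mathrm{Gal}(k_F/k)$; hence there is exactly one element of $D_{\mathbf{q}}$ inducing the $q_0$-power map, namely $\left(\frac{\mathbf{q}}{F/K}\right)$. Since $\phi|_F$ is such an element, $\phi|_F=\left(\frac{\mathbf{q}}{F/K}\right)$, which is the claim. I do not expect a serious obstacle: the only points requiring care are the well-definedness of $\phi|_F$ (handled by the Galois hypothesis on $F/K$) and the uniqueness step (handled by unramifiedness), both of which are routine once the congruence computation is in place.
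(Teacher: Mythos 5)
Your proof is correct and complete, and there is nothing in the paper to compare it against: the paper states this lemma without proof, so your argument stands on its own. It is the standard one, and it fits exactly the framework the paper sets up: since $\wp$ is unramified, the paper's sequence identifies $D_{\wp}$ with $Gal(k_L/k)$, so the Artin symbol is characterized as the unique element of the decomposition group inducing $x\mapsto x^{q_0}$ on the residue field, and you verify this characterization for $\phi|_F$ at $\mathbf{q}=\wp\cap\mathbb{O}_F$. The two points of genuine content are handled properly: the restriction $\phi|_F$ is well defined only because $F/K$ is Galois, and the exponent $q_0=|k|$ is attached to the common base field $K$, which is why the restricted Frobenius is again the Frobenius; the uniqueness step likewise needs $e(\mathbf{q}/\mathbf{p})=1$, which you correctly deduce from multiplicativity of ramification in the tower. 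One further point in your favor: the statement as printed contains a typo, since the right-hand side should be $\left( \frac{\wp \cap \mathbb{O}_F}{F/K} \right)$ rather than $\left( \frac{\wp \cap \mathbb{O}_F}{L/K} \right)$, and your proof silently establishes the corrected statement, which is the one the paper actually uses later.
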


For a fixed prime $l$, we want to define the $l^{th}$ residue symbol $(\frac{a}{p})_l$ for each prime $p \neq l$ and integer $a$ coprime to $p$. We will write $f_a(X)=X^l-a$ and let $K_a$ denote splitting field of $f_a(X)$. Then $K_a\supset \mathbb{Q}(\zeta_l)$. For any prime $\mathbf{p}$ of $\mathbb{Q}(\zeta_l)$ above $p$ we let $\sigma_{\mathbf{p},a} \in Gal(K_a/ \mathbb{Q}(\zeta_l))$ denote the Artin symbol for the prime $\mathbf{p}$ for the extension $K_a/ \mathbb{Q}(\zeta_l)$. Then $\sigma_{\mathbf{p},a} (a^{1/l})/a^{1/l}$ is an $l^{th}$ root of unity and we will define $\left( \frac{a}{\mathbf{p}} \right)_l=\sigma_{\mathbf{p},a} (a^{1/l})/a^{1/l}$. \\

\begin{lem}
 The definition of $l^{th}$ residue symbol given here and the one given earlier are equivalent.
\end{lem}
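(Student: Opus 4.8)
The plan is to unwind the Artin-symbol definition into an explicit congruence and recognize Kummer's criterion. First I would note that $K_a=\mathbb{Q}(\zeta_l,a^{1/l})$ is a Kummer extension of $\mathbb{Q}(\zeta_l)$, hence abelian, so that for $p\neq l$ with $p\nmid a$ the prime $\mathbf{p}$ is unramified in $K_a/\mathbb{Q}(\zeta_l)$ and the Artin symbol $\sigma_{\mathbf{p},a}$ is well defined; moreover $a^{1/l}$ is a root of the monic integral polynomial $X^l-a$, so $a^{1/l}\in\mathcal{O}_{K_a}$. Fixing a prime $\wp$ of $K_a$ above $\mathbf{p}$, the defining property of the Frobenius (the element of $D_\wp$ mapping to $\sigma_\wp$ under the exact sequence displayed above) is that
\[
\sigma_{\mathbf{p},a}(x)\equiv x^{p^f}\pmod{\wp}\qquad\text{for all }x\in\mathcal{O}_{K_a},
\]
where $p^f$ is the cardinality of the residue field of $\mathbf{p}$ and $f$ is the inertia degree of $p$ in $\mathbb{Q}(\zeta_l)$.

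Next I would apply this congruence to $x=a^{1/l}$. Since $p\nmid a$, the element $a^{1/l}$ is a unit modulo $\wp$ (its $l$th power is $a\not\equiv 0$), so division is legitimate and one obtains
\[
\left(\frac{a}{\mathbf{p}}\right)_l=\frac{\sigma_{\mathbf{p},a}(a^{1/l})}{a^{1/l}}\equiv\frac{(a^{1/l})^{p^f}}{a^{1/l}}=a^{\frac{p^f-1}{l}}\pmod{\wp}.
\]
By the earlier definition of the residue symbol (Kummer's criterion, Theorem 1), the right-hand side satisfies $a^{\frac{p^f-1}{l}}\equiv\zeta_l^i\pmod{\mathbf{p}}$, where $\zeta_l^i$ is the value of the symbol in the first sense; as $\wp$ lies above $\mathbf{p}$, this congruence persists modulo $\wp$.

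Finally I would close by the usual distinctness argument. Both $\sigma_{\mathbf{p},a}(a^{1/l})/a^{1/l}$ and $\zeta_l^i$ are $l$th roots of unity, and the two displays show they are congruent modulo $\wp$. Because $p\neq l$, the prime $\wp$ does not divide $l$, so the $l$ distinct $l$th roots of unity remain distinct modulo $\wp$; hence the two roots of unity must coincide, which is exactly the claimed equivalence. The only point requiring care—and the main place where bookkeeping can go wrong—is keeping all congruences in $\mathcal{O}_{K_a}$ modulo $\wp$ rather than modulo $\mathbf{p}$ (since both $a^{1/l}$ and $\sigma_{\mathbf{p},a}$ live in $K_a$), together with matching the exponent identity $|\mathcal{O}_{\mathbb{Q}(\zeta_l)}/\mathbf{p}|=p^f$ to the exponent $\frac{p^f-1}{l}$ in the first definition; no input beyond the defining property of the Frobenius and the separability of $X^l-1$ modulo $\wp$ is needed.
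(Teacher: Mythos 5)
Your proof is correct and takes essentially the same route as the paper: apply the defining congruence of the Frobenius to $a^{1/l}$, divide by $a^{1/l}$, and match the result against Kummer's criterion via the distinctness of $l$th roots of unity modulo the prime. You are in fact more careful than the paper, which writes its congruences modulo $\mathbf{p}$ rather than modulo a prime $\wp$ of $K_a$ above it and leaves the final distinctness step implicit.
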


\begin{proof}

Let $f$ be the inertia degree of $\mathbf{p}$ in the extension $\mathbb{Q}(\zeta_l)/ \mathbb{Q}$. Then the Artin symbol $\sigma_{\mathbf{p},a}$ satisfies
\begin{equation*}
 \sigma_{\mathbf{p},a} (a^{\frac{1}{l}}) \equiv a^{\frac{p^f}{l}} \quad ( \mbox{ mod }\mathbf{p} ),
\end{equation*}
\begin{equation*}
 i.e. \quad \frac{\sigma_{\mathbf{p},a} (a^{\frac{1}{l}})}{a^{\frac{1}{l}}} \equiv a^{\frac{p^f-1}{l}} \quad ( \mbox{ mod }\mathbf{p} ),
\end{equation*}
which proves the lemma.

\end{proof}

The follwoing theorem is well known but we supply a proof for the sake of completeness.
\begin{thm}
 If $n$ is not an $l^{th}$ power of an integer then the estimate $$\sum_{\mathbf{p}; Norm(\mathbf{p}) \leq x}(\frac{n}{\mathbf{p}})_l = o(\pi (x))$$ holds as $x\rightarrow \infty$. Here Norm denotes for Norm map of the extension $\mathbb{Q}(\zeta_l)/ \mathbb{Q}$.
\end{thm}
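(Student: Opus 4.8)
The plan is to recognize the residue symbol $(\frac{n}{\mathbf{p}})_l$ as the value of a fixed nontrivial character evaluated at the Artin symbol, and then to invoke the Cebotarev density theorem together with the vanishing of the sum of all $l^{th}$ roots of unity. First I would pin down the relevant extension. Writing $K_n=\mathbb{Q}(\zeta_l, n^{1/l})$ as in the second definition of the symbol above, I claim $K_n/\mathbb{Q}(\zeta_l)$ is cyclic of degree exactly $l$. Since $n$ is not an $l^{th}$ power, Lemma 1 gives $[\mathbb{Q}(n^{1/l}):\mathbb{Q}]=l$, while $[\mathbb{Q}(\zeta_l):\mathbb{Q}]=l-1$; as $\gcd(l,l-1)=1$ the compositum has degree $l(l-1)$ over $\mathbb{Q}$, so $[K_n:\mathbb{Q}(\zeta_l)]=l$. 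The map $\chi:\sigma\mapsto \sigma(n^{1/l})/n^{1/l}$ is an injective homomorphism from $Gal(K_n/\mathbb{Q}(\zeta_l))$ into the group of $l^{th}$ roots of unity, hence an isomorphism; in particular $\chi$ is a nontrivial character. By the definition of the symbol through the Artin map, $(\frac{n}{\mathbf{p}})_l=\chi(\sigma_{\mathbf{p},n})$ for every prime $\mathbf{p}$ of $\mathbb{Q}(\zeta_l)$ unramified in $K_n$, that is, for all but finitely many $\mathbf{p}$.

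Next I would split the sum according to the value of the Artin symbol. Since $K_n/\mathbb{Q}(\zeta_l)$ is abelian of degree $l$, Theorem 5 says that for each $\sigma\in Gal(K_n/\mathbb{Q}(\zeta_l))$ the primes $\mathbf{p}$ with $\sigma_{\mathbf{p},n}=\sigma$ have density $1/l$; in counting form this reads
\[
\#\{\mathbf{p}: Norm(\mathbf{p})\le x,\ \sigma_{\mathbf{p},n}=\sigma\}=\tfrac{1}{l}\,\pi(x)+o(\pi(x)),
\]
where I use the prime ideal theorem to note that the total number of prime ideals of $\mathbb{Q}(\zeta_l)$ of norm at most $x$ is asymptotic to $\pi(x)$ (the primes of residue degree $\ge 2$ number $O(\sqrt{x})$ and are negligible). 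Absorbing the finitely many ramified or excluded primes into an $O(1)$ term, I obtain
\[
\sum_{\mathbf{p};\,Norm(\mathbf{p})\le x}\Big(\tfrac{n}{\mathbf{p}}\Big)_l=\sum_{\sigma}\chi(\sigma)\Big(\tfrac{1}{l}\pi(x)+o(\pi(x))\Big)=\frac{\pi(x)}{l}\sum_{\sigma}\chi(\sigma)+o(\pi(x)).
\]
Because $\chi$ is a nontrivial character of a finite group, orthogonality gives $\sum_{\sigma}\chi(\sigma)=0$, which is exactly the identity $\sum_{j=0}^{l-1}\zeta_l^{\,j}=0$; the main term therefore drops out and the estimate $o(\pi(x))$ follows.

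The routine parts are the degree computation and the root of unity cancellation. The one point requiring care is the passage from the density statement of Theorem 5 to the counting asymptotic with a uniform $o(\pi(x))$ error: I must check that the error terms coming from the $l$ distinct Frobenius classes are each $o(\pi(x))$ and may be summed (they are, since there are only finitely many $\sigma$), and that the prime ideal theorem legitimately lets me replace the count of norm-bounded prime ideals of $\mathbb{Q}(\zeta_l)$ by $\pi(x)$. This is the step I would be most careful about, but it is standard once the Cebotarev theorem is used in its counting form rather than as a bare density statement.
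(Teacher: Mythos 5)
Your proposal is correct and follows essentially the same route as the paper's own proof: decompose the sum over primes according to the value of the Artin symbol $\sigma_{\mathbf{p},n}$, apply the Cebotarev density theorem (Theorem 5) to each of the $l$ classes, and conclude via the vanishing of $\sum_{b=1}^{l}\zeta_l^b$. You are in fact somewhat more careful than the paper, which leaves implicit both the verification that $[K_n:\mathbb{Q}(\zeta_l)]=l$ (so that each Frobenius class has density $1/l$) and the use of the prime ideal theorem to measure norm-bounded prime ideals against $\pi(x)$.
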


\begin{proof}
 One has $$\sum_{\mathbf{p}; Norm(\mathbf{p}) \leq x}(\frac{n}{\mathbf{p}})_l = {\sum_{b=1}^l}\left( \sum_{Norm(\mathbf{p}) \leq x, \sigma_{\mathbf{p},n}=\tau_b}(\frac{n}{\mathbf{p}})_l\right),$$ where $\tau_b$ is automorphism of $K_n$ which sends $n^{\frac{1}{l}}\longmapsto (\zeta_l)^b n^{\frac{1}{l}}$. hence \\
$$ \sum_{\mathbf{p}; Norm(\mathbf{p}) \leq x}(\frac{n}{\mathbf{p}})_l= \sum_{b=1}^l \left( \sum_{Norm(\mathbf{p}) \leq x, \sigma_{\mathbf{p},n}=\tau_b} \zeta_l^b \right).$$\\  Thus we obtain
$$\sum_{\mathbf{p}; Norm(\mathbf{p}) \leq x}(\frac{n}{\mathbf{p}})_l= {\sum_{b=1}^{l}} \zeta_l^b (\frac{1}{l} \pi (x)+o(\pi (x)))=\frac{1}{l} \pi(x) {\sum_{b=1}^{l}} \zeta_l^b+o(\pi(x))=o(\pi(x)).$$ Because of the nontriviality of the character, the first term is zero. This proves the result.
\end{proof}

Given  $m$ elements $\zeta_l^{r_i}$ in $\mu_l$, the group of $l^{th}$ roots of unity, not necessarily distinct, we want to determine density of primes $\mathbf{p}$ which satisfy $(\frac{a_i}{\mathbf{p}})_l=\zeta_l^{r_i}$. For this, we will consider the counting function $$S_x=\frac{1}{ul^m} \sum_{\mathbf{p}; Norm(\mathbf{p}) \leq x, \mathbf{p} \notin S} \prod_{k=1}^m\left( \prod_{j=1, j\neq r_k}^l(\zeta_l^j-(\frac{a_k}{\mathbf{p}})_l)\right),$$ here $S$ is the set of primes dividing $a_1 \ldots a_m$ and $u$ is a unit satisfying $$ul^m=\prod_{k=1}^m\prod_{j=1, j\neq r_k}^l(\zeta_l^j-\zeta_l^{r_k}).$$ We note that $S_x$ exactly counts number of primes $\mathbf{p}$ of Norm up to $x$ which satisfy $(\frac{a_i}{\mathbf{p}})_l=\zeta_l^{r_i}$ for all $i$.
 Note that the choices of $r_i$ can not be arbitrary because of the multiplicativity of $l^{th}$ power residue symbol. i.e. to say that the assignment $a_i\longrightarrow \zeta_l^{r_i}$ shall be restriction of some morphism of semigroups $\mathbb{Z}^*/{{\mathbb{Z}^*}^l}\longrightarrow \mu_l$, but the counting function already takes care of this. To show this we note that any multiplicative relation among $a_i^{'}$s can be brought into the form $\prod_{k=1}^m a_i^{c_i}=c^l$ for some integers $c_i \mbox{ and }c$. Now the corresponding relation expected in $\mu_l$ is $\prod_{i=1}^m \zeta_l^{r_i c_i}=1$. If this does not hold then it is easy to see from Theorem 3 that for each prime $\mathbf{p}$ there is an $i$ such that $(\frac{a_i}{\mathbf{p}})_l\neq \zeta_l^{r_i}$. Thus if $r_i$'s satisfy the required condition then $S_x$ exactly counts number of primes of Norm up to $x$ which satisfy $(\frac{a_i}{\mathbf{p}})_l=\zeta_l^{r_i}$ for all $i$. In case there is inconsistency among choices of $r_i$, then $S_x=0$. \\

Now to estimate $S_x$, we can actually pass down to the corresponding counting funtion for $b_j^{'}s$ which also will be denoted by $S_x$. When we change from the set $S=\{a_1, \ldots ,a_m\}$ to the set $T=\{b_1, \ldots ,b_t\}$ obtained as in the algorithm in section 2, then, the given $m$ elements $\zeta_l^{r_i}$ uniquely determine a set of $t$ elements $\zeta_l^{s_j}$ such that $(\frac{a_i}{\mathbf{p}})_l=\zeta_l^{r_i}, \mbox{ for all } 1 \leq i \leq m$ iff $(\frac{b_j}{\mathbf{p}})_l=\zeta_l^{s_j} \mbox{ for all } 1 \leq j \leq t$. If the conditions $ (\frac{a_i}{\mathbf{p}})_l=\zeta_l^{r_i}, \mbox{ for all }1 \leq i \leq m$ lead to a condition of the form $(\frac{b_j}{\mathbf{p}})_l=\zeta_l^{s_j} \mbox{ for all } 1 \leq j \leq t$ with $b_j$ an $l^{th}$ power and $s_j\neq 0$, then we can immediately conclude that there is no prime $\mathbf{p}$ satisfying the condition. Studying the counting function with the $b_j$ makes it easier, since there will be only one main term with one root of unity in it (not a sum of roots of unity). Hence, it is enough to study the behaviour of primes $\mathbf{p}$ which satisfy $(\frac{b_j}{\mathbf{p}})_l=\zeta_l^{s_j} \mbox{ for all } 1 \leq j \leq t$. Now we consider the counting function\\
$$S_x=\frac{1}{vl^t} \sum_{\mathbf{p}; Norm(\mathbf{p}) \leq x, \mathbf{p} \notin S^{'}} \prod_{k=1}^t \left(\prod_{j=1, j\neq r_k}^l(\zeta_l^j-(\frac{b_k}{\mathbf{p}})_l)\right).$$
Here $S^{'}$ is the set of primes dividing $b_1 \ldots b_t$ and $v$ is a unit satisfying $$vl^t=\prod_{k=1}^t \prod_{j=1, j\neq s_k}^l(\zeta_l^j-\zeta_l^{s_k}).$$ We emphasize that $S_x$ exactly counts number of primes of Norm up to $x$ which satisfy $(\frac{a_i}{\mathbf{p}})_l=\zeta_l^{r_i}$ for all $i$. Because of multiplicativity of $l^{th}$ power residue symbol one obtains $$S_x=\frac{1}{u l^t} \sum_{\mathbf{p}; Norm(\mathbf{p}) \leq x, \mathbf{p}\notin S^{`}} \sum_{0\leq d_i \leq l-1, n=\prod b_i^{d_i}} \zeta_l^{t_n}(\frac{n}{\mathbf{p}})_l \mbox{ for some integer }t_n.$$ After changing the order of summation, we see that here main term comes from those $n$ which are $l^{th}$ power, by Theorem 6, and if $n$ is not an $l^{th}$ power then the contribution is $o(\pi(x))$. But from the construction of $b_j$ its clear that no $n \neq 1$ will be an $l^{th}$ power. Hence, the main term will give, in absolute value, $\frac{1}{ l^t}(\pi(x)-|S^{'}|)$. Thus density of the primes $\mathbf{p}$ satisfying $(\frac{b_j}{\mathbf{p}})_l=\zeta_l^{s_j} \mbox{ for all } 1 \leq j \leq t$ and hence satisfying $(\frac{a_i}{\mathbf{p}})_l=\zeta_l^{r_i}, \mbox{ for all } 1 \leq i \leq m$ is $\frac{1}{l^t}$.

\begin{rem}
1. Note that the density does not depend upon the choice of $r_i$ as long as long as there is consistency required.\\
2. One can also obtain the density of primes $\mathbf{p}$ of $\mathbb{Q}(\zeta_l)$ which satisfy $(\frac{\alpha_i}{\mathbf{p}})_l =\zeta_l^{r_i}$, where $\alpha_i$ are integers of $\mathbb{Q}(\zeta_l)$ and $r_i$'s are as in the Introduction. The above proof may not work in this case. However we see from the second definition of $l^{th}$ residue symbol in section 3 one immediately notices that if the requirement $\left(\frac{\alpha_i}{\mathbf{p}}\right)_l=\zeta_l^{r_i}$ is consistent (in the same sense as in section 3) then it uniquely determines an element in $Gal(\mathbb{Q}(\zeta_l,\alpha_1^{\frac{1}{l}}, \ldots ,\alpha_m^{\frac{1}{l}})/ \mathbb{Q}(\zeta_l))$, and hence density of such primes $\mathbf{p}$ is $[\mathbb{Q}(\zeta_l,\alpha_1^{\frac{1}{l}}, \ldots ,\alpha_m^{\frac{1}{l}}): \mathbb{Q}(\zeta_l)]^{-1}$.
\end{rem}

\section{Another way to find the degree}
Let $p_1,p_2, \ldots ,p_n$ be all the primes dividing $a_1 \ldots a_m$. Let us write $\lambda_{ij}$ for exact power of $p_j$ dividing $a_i$. Then we will consider the $m \times n$ matrix $T$ whose $(i,j)^{th}$ entry is $\lambda_{ij}$. Note that for our purpose we can assume that $0 \leq \lambda_{ij} \leq l-1$.

\begin{lem}
 The cardinality of the set $A=\{(\lambda_i)_{i=1}^m : 0\leq \lambda_i \leq l-1, \prod_i a_i^{\lambda_i} \in \mathbb{Z}^l\}$ is a power of $l$.
\end{lem}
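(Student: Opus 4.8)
The plan is to realize $A$ as the kernel of a group homomorphism out of the elementary abelian $l$-group $(\mathbb{Z}/l\mathbb{Z})^m$, so that $|A|$ is forced to be a power of $l$. Write $V=(\mathbb{Z}/l\mathbb{Z})^m$ and regard each tuple $(\lambda_i)_{i=1}^m$ with $0\leq \lambda_i \leq l-1$ as an element of $V$ via reduction modulo $l$; this gives a bijection between $\{0,\ldots,l-1\}^m$ and $V$. The target is the quotient group $\mathbb{Q}^*/(\mathbb{Q}^*)^l$, which is killed by $l$ and hence is naturally an $\mathbb{F}_l$-vector space. Define
$$\phi : V \longrightarrow \mathbb{Q}^*/(\mathbb{Q}^*)^l, \qquad \phi\big((\lambda_i)_i\big)=\prod_{i=1}^m a_i^{\lambda_i} \pmod{(\mathbb{Q}^*)^l}.$$

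First I would check that $\phi$ is well defined and additive. Replacing any $\lambda_i$ by $\lambda_i+l$ multiplies the product by $a_i^l\in(\mathbb{Q}^*)^l$, so the class in the quotient depends only on $\lambda_i \bmod l$; additivity is immediate from $\prod a_i^{\lambda_i+\mu_i}=\prod a_i^{\lambda_i}\cdot\prod a_i^{\mu_i}$. The crucial step is to identify $A$ with $\ker\phi$. For a tuple with $0\leq\lambda_i\leq l-1$ the number $n=\prod a_i^{\lambda_i}$ is a genuine nonzero integer, and membership in $A$ requires $n\in\mathbb{Z}^l$, whereas membership in $\ker\phi$ requires only $n\in(\mathbb{Q}^*)^l$. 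These two conditions coincide: an integer that is the $l$-th power of a rational number is already the $l$-th power of an integer, as one sees from unique factorization, each prime then occurring in $n$ to a multiple of $l$; and since $l$ is odd the sign poses no obstruction because $-1=(-1)^l$.

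Consequently $A=\ker\phi$ is a subgroup of $V$, and since every subgroup of the finite $\mathbb{F}_l$-vector space $V$ is itself an $\mathbb{F}_l$-subspace, we get $|A|=l^{\dim_{\mathbb{F}_l}\ker\phi}$, a power of $l$, proving the lemma. The one point genuinely requiring care, and the place where the hypothesis that $l$ is prime and that the $a_i$ are integers enters, is the equivalence $n\in\mathbb{Z}^l\iff n\in(\mathbb{Q}^*)^l$ used to match $A$ with $\ker\phi$; everything else is formal. I would finally remark that rank–nullity gives $|A|=l^{\,m-\mathrm{rank}\,\phi}$, where $\mathrm{rank}\,\phi=\dim_{\mathbb{F}_l}\mathrm{im}\,\phi$ is the $\mathbb{F}_l$-rank of the subgroup of $\mathbb{Q}^*/(\mathbb{Q}^*)^l$ generated by the $a_i$; this is exactly the quantity governing the degree computed in Section 2 and the rank of the matrix $T$, which is how this count connects to the degree $[\mathbb{Q}(a_1^{1/l},\ldots,a_m^{1/l}):\mathbb{Q}]$.
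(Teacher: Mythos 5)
Your proof is correct and follows essentially the same route as the paper: both realize $A$ as the kernel of the linear map $(\mathbb{Z}/l\mathbb{Z})^m \to \mathbb{Q}^*/(\mathbb{Q}^*)^l$ sending $(\lambda_i)$ to the class of $\prod_i a_i^{\lambda_i}$, so that $|A|$ is the size of an $\mathbb{F}_l$-subspace. The only difference is that you explicitly verify the point the paper leaves implicit, namely that for an integer $n$ one has $n\in\mathbb{Z}^l \iff n\in(\mathbb{Q}^*)^l$, which is a worthwhile addition but not a change of method.
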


\begin{proof}
 Consider the $\mathbb{Z}/l \mathbb{Z}$ vector space $(\mathbb{Z}/l \mathbb{Z})^m$ with basis $S=\{a_1, \ldots ,a_m\}$. $\mathbb{Z}/ l\mathbb{Z}$ acts on $\mathbb{Q}^*/ (\mathbb{Q}^*)^l \mbox{ by } \alpha . x=x^{\alpha}$. Consider the map T:$(\mathbb{Z}/ l \mathbb{Z})^m \longrightarrow \mathbb{Q}^*/ (\mathbb{Q}^*)^l$ which sends $a_i\rightarrow \bar{a_i}$ and extend it linearly then $\sum_i \lambda_i a_i \in kerT \mbox{ iff } (\lambda_i)\in A$. This proves that $|A|$ is an $l^{th}$ power.
\end{proof}

As mentioned in \cite{SHW} the degree $\mathbb{Q}(a_1^{\frac{1}{l}}, \ldots ,a_m^{\frac{1}{l}},\zeta_l)/\mathbb{Q}(\zeta_l)$ is $l^m/l^r$ where $l^r$ is the cardinality of set $A$. Now we relate this degree to the rank of the matrix $T$.

\begin{thm}
 The rank of the matrix $T$ is $m-r$.
\end{thm}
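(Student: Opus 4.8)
The plan is to recognise that the set $A$ from Lemma 3, viewed as a subset of $(\mathbb{Z}/l\mathbb{Z})^m$, is precisely the left null space of the matrix $T$ over $\mathbb{Z}/l\mathbb{Z}$, and then to read off the rank from the rank--nullity theorem.

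First I would make explicit the isomorphism $\mathbb{Q}^*/(\mathbb{Q}^*)^l \cong \bigoplus_p \mathbb{Z}/l\mathbb{Z}$, the sum running over all rational primes $p$, given by sending a class to the tuple of $p$-adic valuations reduced modulo $l$. Since $l$ is odd, $-1=(-1)^l$ is an $l^{th}$ power, so the sign contributes nothing and this map is a well-defined isomorphism; the equivalence ``$x\in\mathbb{Z}^l$ iff every valuation of $x$ is divisible by $l$'' is just unique factorisation. Under this identification the image of $\bar{a_i}$ is supported on the primes $p_1,\ldots,p_n$ dividing $a_1\cdots a_m$, and its $j^{th}$ coordinate is exactly $\lambda_{ij}\ \mathrm{mod}\ l$. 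Hence the linear map $T$ appearing in the proof of Lemma 3, composed with the projection onto the coordinates $p_1,\ldots,p_n$ (the only primes that occur), sends the basis vector corresponding to $a_i$ to the $i^{th}$ row of the matrix $T$. In other words it is the map $(\mathbb{Z}/l\mathbb{Z})^m \to (\mathbb{Z}/l\mathbb{Z})^n$ given by $\vec{x}\mapsto \vec{x}\,T$, where $\vec x$ is regarded as a row vector.

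Next I would translate the defining condition of $A$. For $\vec\lambda=(\lambda_1,\ldots,\lambda_m)$ the exponent of $p_j$ in $\prod_i a_i^{\lambda_i}$ is $\sum_i \lambda_i \lambda_{ij}$, so $\prod_i a_i^{\lambda_i}\in\mathbb{Z}^l$ is equivalent to $\sum_i \lambda_i\lambda_{ij}\equiv 0 \pmod{l}$ for every $j$, i.e. to $\vec\lambda\, T = 0$ over $\mathbb{Z}/l\mathbb{Z}$. Thus $A$ is exactly the left null space of $T$, a subspace of $(\mathbb{Z}/l\mathbb{Z})^m$ whose dimension is $r$, because $|A|=l^r$ by Lemma 3.

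Finally, applying the rank--nullity theorem to the linear map $\vec x\mapsto \vec x\,T$ on $(\mathbb{Z}/l\mathbb{Z})^m$ gives $\dim(\text{left null space of }T)=m-\mathrm{rank}(T)$. Combined with the previous step this yields $r=m-\mathrm{rank}(T)$, hence $\mathrm{rank}(T)=m-r$, as claimed. I expect the only point needing genuine care to be the bookkeeping in the identification of the second paragraph, namely checking that the abstract map $T$ of Lemma 3 literally coincides with left multiplication by the matrix $T$ and that the $l^{th}$-power condition corresponds to the simultaneous vanishing of all valuations modulo $l$; once this is pinned down, the conclusion is the standard dimension count.
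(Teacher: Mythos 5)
Your proof is correct and takes essentially the same route as the paper: both arguments identify $A$ with the left null space of $T$ over $\mathbb{Z}/l\mathbb{Z}$ (via the observation that $\prod_i a_i^{\lambda_i}\in\mathbb{Z}^l$ is equivalent to $\sum_i \lambda_i\lambda_{ij}\equiv 0 \pmod{l}$ for all $j$) and then count dimensions. The only difference is cosmetic: the paper performs the dimension count by hand, parametrizing kernel vectors by the coordinates beyond a maximal independent set of rows, whereas you invoke rank--nullity directly, and your explicit check of the identification $\mathbb{Q}^*/(\mathbb{Q}^*)^l\cong\bigoplus_p \mathbb{Z}/l\mathbb{Z}$ (using that $-1$ is an $l^{th}$ power for odd $l$) makes precise a step the paper leaves implicit.
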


\begin{proof}
 If there are $x_i, 1\leq i \leq m \mbox{ with } 0\leq x_i \leq l-1$ such that $\prod_{i=1}^m a_i^{x_i} \in \mathbb{Z}^l$, then for all $j$ we have $x_1 \lambda_{1j}+ \ldots +x_m \lambda_{mj}=0 (\mbox{ mod }l)$, i.e the row vectors $(\lambda_{i1},  \ldots , \lambda_{in}), 1 \leq i \leq m$ in $(\mathbb{Z}/{l \mathbb{Z}})^n$ are linearly dependent.
Conversely any such linear dependence among row vectors of matrix $T$ will give exactly $l$ many relations of the type in set $A$. Let $r$ be the rank of the matrix $T$. After a rearrangement we can assume that the row vectors $(\lambda_{i1},  \ldots , \lambda_{in}), 1 \leq i \leq r$ are linearly independent. Then we see that for any choice of $x_i, 1\leq i \leq r \mbox{ with } 1\leq x_i \leq l-1$ the condition $\prod_{i=1}^r a_i^{x_i} \in \mathbb{Z}^l$ does not hold. On the other hand for any selection of $x_j, j>r$ we have $l$ many relation of the form $\prod_{i=1}^m a_i^{x_i} \in \mathbb{Z}^l$ (this can be seen by looking at the vectors $(\lambda_{i1},  \ldots , \lambda_{in}), 1 \leq i \leq r$  and $x_{r+1}(\lambda_{r+11},  \ldots , \lambda_{r+1n})+ \ldots +x_m(\lambda_{m1},  \ldots , \lambda_{mn})$ which are linearly dependent). This proves the theorem.
\end{proof}

\noindent{\bf Acknowledgement.}
We thank Professor Kannan Soundararajan for pointing out some mathematical inaccuracies in an earlier draft. Thanks are also due to the anonymous referee for a careful reading of the manuscript and helping us in improving the presentation.


\begin{thebibliography}{1}
 \bibitem{RB}
R. Balasubramanian, F. Luca, R. Thangadurai, On the exact degree of $\mathbb{Q}(a_1^{\frac{1}{l}}, \ldots ,a_m^{\frac{1}{l}})$ over $\mathbb{Q}$, Proceedings of the American Mathematical Society, Volume 138, number 7, July 2010, pages 2283-2288.

\bibitem{SHW}
Steven H. Weintraub, Galois Theory, Springer-Verlag 2006 (Universitext)

\bibitem{CF}
Cassels J. W. S., Frohlich, A., Algebraic Number Theory, The London Mathematical Society, 1967.

\bibitem{JN}
Jurgen Neukirch, Algebraic Number Theory, Springer 1991.

\bibitem{HK}
Helmut Koch, Number Theory: Algebraic Numbers and Functions, Graduate studies in mathematics, volume 24, 2000.

\bibitem{LW}
Lawrence C. Washington, Introduction to Cyclotomic Fields, Second Edition, Springer 1991.

\bibitem{DH}
David Hilbert, The theory of Algebraic Number Fields, 1991.

\bibitem{EM}
M. Ram Murty and Jody Esmonde, Problems in Algebraic Number Theory, Graduate Text in Mathematics, 1991.
\end{thebibliography}
\end{document}